\definecolor{darkblue}{rgb}{0.0,0,0.7} % darkblue color
\definecolor{darkred}{rgb}{0.7,0,0} % darkred color
\numberwithin{equation}{section}
\newtheorem{thm}[equation]{Theorem}
\newtheorem{prop}[equation]{Proposition}
\newtheorem{lem}[equation]{Lemma}
\theoremstyle{definition}
\newtheorem{rmq}[equation]{Remark}
\newcommand\lexp[2]{\kern\scriptspace\vphantom{#2}^{#1}\kern-\scriptspace#2}
\title[On generalized categories of Soergel bimodules in type $A_2$]{On generalized categories of Soergel bimodules in type $A_2$}
\author{Thomas Gobet}
\address{Thomas Gobet, Institut \'Elie Cartan de Lorraine, Universit\'e  de Lorraine, site de Nancy,
B.P. 70239,
54506 Vandoeuvre-l\`{e}s-Nancy Cedex, France}
\email{thomas.gobet@univ-lorraine.fr}
\author{Anne-Laure Thiel}
\address{Anne-Laure Thiel, Institut f\"ur Geometrie und Topologie, Fachbereich Mathematik, Universit\"at Stuttgart, 70569 Stuttgart, Deutschland.}
\email{anne-laure.thiel@mathematik.uni-stuttgart.de} 
\begin{document}
\maketitle

\begin{abstract}
In this note, we compute the split Grothendieck ring of a generalized category of Soergel bimodules of type $A_2$, where we take one generator for each reflection. We give a presentation by generators and relations of it and a parametrization of the indecomposable objects in the category, by realizing them as rings of regular functions on certain unions of graphs of group elements on a reflection faithful representation.  
\end{abstract}
%\tableofcontents
\section{Introduction}

Let $(W,S)$ be a Coxeter system, $V$ a reflection faithful representation (as defined in \cite[Definition 1.5]{S}) of $W$ over the real numbers. Let $R=\mathcal{O}(V)=S(V^*)$ be the ring of polynomial functions on $V$, graded so that $\mathrm{deg}(V^*)=2$. To each simple reflection $s\in S$, one associates the graded $R$-bimodule $B_s:=R\otimes_{R^s} R(1)$ (here $(1)$ denotes a grading shift), where $R^s\subseteq R$ is the graded subring of $s$-invariant functions. Soergel showed that the split Grothendieck ring $\langle \mathcal{B} \rangle$ of the Karoubian envelope $\mathcal{B}$ of the category of graded $R$-bimodules generated by (shifted) tensor products of the $B_s$ over $R$ is isomorphic to the Iwahori-Hecke algebra $\mathcal{H}(W)$ of the Coxeter system $(W,S)$ (\cite[Theorem 1.10, Remark 1.14]{S}). The class $\langle B_s\rangle$ of $B_s$ in $\langle \mathcal{B} \rangle$ corresponds to the Kazhdan-Lusztig generator $C'_s$ of $\mathcal{H}(W)$, see~\cite{KL}. More generally, Soergel's conjecture \cite[Conjecture 1.13]{S} (which was proven in full generality by Elias and Williamson \cite{EW}) asserts that to each element $C_w'$ of the canonical Kazhdan-Lusztig basis corresponds an indecomposable bimodule in $\mathcal{B}$. The bimodule corresponding to $w$ generalizes the (equivariant) intersection cohomology of the Schubert variety $X_w$ (which can be defined only in the case where $W$ is a Weyl group; see \cite{S1}). 

The definition of $B_s$ makes sense for every reflection $s\in T:=\bigcup_{w\in W} w S w^{-1}$, not only for the simple reflections $s\in S$. In this note, we compute the split Grothendieck ring $\langle\mathcal{C}\rangle$ of the category $\mathcal{C}$ generated by the $B_t:=R \otimes_{R^t} R(1)$, $t\in T$ in the case where $W$ is of type $A_2$ (note that in Soergel's category $\mathcal{B}$, the notation $B_t$ stands for the indecomposable bimodule associated to the group element $t$, which in general is \textit{not} isomorphic to $R\otimes_{R^t} R(1)$; in this note $B_t$ will always denote $R\otimes_{R^t} R(1)$). This ring is a free $\mathbb{Z}[v^{\pm 1}]$-algebra $A(W)$ of rank $20$, and we identify the indecomposable objects in $\mathcal{C}$: they are in one-to-one correspondence with subsets $A\subseteq W$ such that there is $t\in T$ with $tA=A$. We give a presentation by generators and relations of the resulting algebra $A(W)$, which turns out to be a quotient of the Iwahori-Hecke algebra of type $\widetilde{A_2}$. While the category $\mathcal{C}$ can be defined for an arbitrary Coxeter group $W$, we do not know how to compute $\langle \mathcal{C}\rangle$ for $W$ of type $A_3$ or $B_2$, where it is not even clear to us that $\langle \mathcal{C} \rangle$ has finite rank.

It would be interesting to find a well-behaved generalization of the algebra obtained in type $A_2$ for finite $W$ or in type $A_n$, possibly by considering split Grothendieck rings of suitable subcategories of $\mathcal{C}$ containing $\mathcal{B}$.

\section{General facts}

Let $x\in W$ and let $R_x$ denote the $R$-bimodule $R$ with the right operation twisted by $x$, that is, for $a\in R$, $r\in R$ we set $a\cdot r= ax(r)$. Note that the embedding $V\hookrightarrow V\times V, v\mapsto (v, x^{-1}v)$ induces an isomorphism of graded bimodules $R_x\cong \mathcal{O}(\mathrm{Gr}(x))$, where $\mathrm{Gr}(x):=\{ (xv, v)~|~v\in V\}$ and $\mathcal{O}(-)$ denotes the $\mathcal{R}$-algebra of regular functions. We have $R_x\otimes_R R_y\cong R_{xy}$ for all $x,y\in W$. Note the following isomorphisms: 
\begin{eqnarray}\label{eq:1}
R\otimes_{R^t} R_t \cong R\otimes_{R^t} R \cong R_t\otimes_{R^{t}} R, \forall t\in T,\\
\label{eq:2} R_w B_t \cong B_{wtw^{-1}} R_w, \forall t\in T, w\in W.
\end{eqnarray} 
The first isomorphisms are given by the maps $a\otimes b\mapsto a\otimes t(b)$ and $a\otimes b\mapsto t(a)\otimes b$ respectively. Hence $B_t\otimes_R R_t\cong B_t\cong R_t\otimes_R B_t$. The last one is given by the well-defined invertible map $R_w \otimes_{R^t} R\rightarrow R\otimes_{R^{wtw^{-1}}} R_w$, $a\otimes b\mapsto a\otimes w(b)$. 

For simplicity we will denote tensor products over $\otimes_R$ by juxtaposition. A consequence of the above isomorphisms is that for all $s, t_1, \dots, t_k\in T$, one has isomorphisms of graded $R$-bimodules 
\begin{eqnarray}\label{eq:3}
B_s B_{t_1} B_{t_2} \cdots B_{t_k} B_s \cong B_s B_{st_1 s} B_{s t_2 s} \cdots B_{st_ks} B_s. 
\end{eqnarray}

We denote by $\mathcal{C}$ the category obtained as Karoubi envelope of the category of (shifted) tensor products of $B_t$ for $t\in T$. We denote by $\mathcal{C}^{\mathrm{ext}}$ the category generated by (shifted) tensor products of $B_t$ for $ t\in T$ and $R_w$ for $ w\in W$. Note that one has inclusions as full subcategories $\mathcal{B}\subseteq \mathcal{C}\subseteq \mathcal{C}^{\mathrm{ext}}$. By \eqref{eq:2}, observe also that $\mathcal{C}^{\mathrm{ext}}$ is generated by the bimodules $B_s$ for $ s\in S$ and $R_w,$ for $w\in W$. 

The isomorphisms given in \eqref{eq:3} give a family of relations satisfied by the generators of $\mathcal{C}$, in addition to those which hold in $\mathcal{B}\subseteq\mathcal{C}$. We assume the reader to be familiar with the combinatorics of Soergel's category $\mathcal{B}$. 

Given $A\subseteq W$, we write $R(A)$ for the graded $R$-bimodule $\mathcal{O}\left(\bigcup_{x\in A} \mathrm{Gr}(x)\right)$. Note that $\bigcup_{x\in A} \mathrm{Gr}(x)$ is a closed subscheme of $V\times V$, inducing a surjective map of graded $R$-bimodules $\mathcal{O}(V\times V)\cong R\otimes_{\mathbb{R}} R\twoheadrightarrow R(A)$. It implies that $R(A)$ is generated as a graded $R$-bimodule by any nonzero element in its degree zero component, hence that it is indecomposable, since this component is one-dimensional. By convention we set $R(\emptyset)=0$. 

We have $B_t\cong R(\{e, t\})(1)$ (see~\cite[Remark 4.3]{S}). For more on the properties of rings of regular functions on unions of graphs we refer the reader to~\cite[Section 4.3]{Will}.   

For $w\in W$, $A\subseteq W$ we have 
\begin{eqnarray}\label{eq:4}
R_w\otimes_R R(A)\cong R(wA)\; \text{and} \; R(A)\otimes_R R_w\cong  R(Aw),
\end{eqnarray}  

where the first map is given by $a\otimes b\mapsto \{(u,v)\mapsto a(u) b\left(w^{-1}u,v\right)\}$ and the second one by $a\otimes b\mapsto \{(u,v)\mapsto a\left(u, wv\right) b(wv)\}$. 

\begin{lem}[{\cite[Lemma 4.5 (1)]{S}}]\label{stable}
Let $A\subseteq W$, $t\in T$ such that $tA=A$. Then $R\otimes_{R^t} R(A)\cong R(A)\oplus R(A)(-2)$. 
\end{lem}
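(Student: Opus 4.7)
The plan is to use the hypothesis $tA=A$ to equip $R(A)$ with a natural involution, split it into its $\pm 1$-eigenspaces, and then induce up along $R^t \hookrightarrow R$.

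First, since $tA=A$, the left action $(u,v)\mapsto(tu,v)$ of $t$ on $V\times V$ preserves $\bigcup_{x\in A}\mathrm{Gr}(x)$ and therefore induces a graded ring involution $\sigma\colon R(A)\to R(A)$, given concretely by $(\sigma f)(u,v)=f(tu,v)$. A direct check shows that $\sigma$ is right $R$-linear and satisfies $\sigma(rf)=t(r)\sigma(f)$ for every $r\in R$, so the $\pm 1$-eigenspaces $R(A)^{\sigma}$ and $R(A)^{-\sigma}$ are sub-$R^t$-$R$-bimodules, and working over $\mathbb{R}$ one has $R(A)=R(A)^{\sigma}\oplus R(A)^{-\sigma}$.

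Let $\alpha_t\in R$ denote the degree-$2$ root of $t$, so that $t(\alpha_t)=-\alpha_t$ and $R=R^t\oplus R^t\alpha_t$ as graded $R^t$-modules. Left multiplication by $\alpha_t$ swaps the $\pm 1$-eigenspaces, and the key claim is that it yields an isomorphism of $R^t$-$R$-bimodules
\[
\alpha_t\cdot-\colon R(A)^{\sigma}(-2)\xrightarrow{\ \sim\ }R(A)^{-\sigma}.
\]
Injectivity is clear since no $\mathrm{Gr}(x)$ is contained in the zero locus of $\alpha_t$, so $\alpha_t$ is a non-zero-divisor on $R(A)$. For surjectivity I would view $R(A)$ through the injection $R(A)\hookrightarrow\prod_{x\in A}\mathcal{O}(\mathrm{Gr}(x))$: here $\sigma$ permutes components via $x\mapsto tx$ (freely, as $t\neq e$), and for $f\in R(A)^{-\sigma}$ the compatibility of the restrictions $f|_{\mathrm{Gr}(x)}$ and $f|_{\mathrm{Gr}(tx)}=-f|_{\mathrm{Gr}(x)}$ on the intersection $\mathrm{Gr}(x)\cap\mathrm{Gr}(tx)=\ker(x^{-1}\alpha_t)$ forces $f|_{\mathrm{Gr}(x)}$ to vanish there. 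Hence $g_x:=f|_{\mathrm{Gr}(x)}/(x^{-1}\alpha_t)$ is a regular function on $\mathrm{Gr}(x)$, and I would then verify that the collection $(g_x)$ satisfies the compatibility conditions on all pairwise intersections $\mathrm{Gr}(x)\cap\mathrm{Gr}(y)$, appealing to reflection faithfulness of $V$ to handle $y\neq tx$ (the case $y=tx$ is automatic from the $\sigma$-invariance of $(g_x)$).

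Combined with the eigenspace decomposition, this yields $R(A)=R(A)^{\sigma}\oplus\alpha_t R(A)^{\sigma}$ as $R^t$-$R$-bimodules, and tensoring with $R$ over $R^t$ gives
\[
R\otimes_{R^t}R(A)\cong\bigl(R\otimes_{R^t}R(A)^{\sigma}\bigr)\oplus\bigl(R\otimes_{R^t}R(A)^{\sigma}\bigr)(-2).
\]
To finish, I would check that the multiplication map $R\otimes_{R^t}R(A)^{\sigma}\to R(A)$, $r\otimes f\mapsto rf$, is an $R$-bimodule isomorphism: surjectivity follows since the image contains both $R(A)^{\sigma}$ and $\alpha_t R(A)^{\sigma}$, and injectivity follows from the uniqueness of an expression $1\otimes f+\alpha_t\otimes g$ in $R\otimes_{R^t}R(A)^{\sigma}$, combined with the direct-sum decomposition $R(A)=R(A)^{\sigma}\oplus R(A)^{-\sigma}$. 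The main obstacle is the surjectivity of multiplication by $\alpha_t$: combining the divisibility on each individual $\mathrm{Gr}(x)$ with the consistent gluing on intersections is precisely where the reflection faithfulness of $V$, and not just its faithfulness, is essentially used.
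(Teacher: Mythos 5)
The paper itself offers no proof of this lemma: it is quoted verbatim from Soergel with the citation \cite[Lemma 4.5 (1)]{S}, so the only meaningful comparison is with the argument your proposal would have to supply on its own. Your architecture is sound and natural: the involution $\sigma$ induced by $t\times\mathrm{id}$ (which exists precisely because $tA=A$), the decomposition $R(A)=R(A)^{\sigma}\oplus R(A)^{-\sigma}$ into $R^t$-$R$-sub-bimodules, and the reduction of the whole lemma to the single claim that $\alpha_t\cdot\colon R(A)^{\sigma}(-2)\to R(A)^{-\sigma}$ is surjective are all correct; so is the concluding bookkeeping (given that claim, $R=R^t\oplus\alpha_t R^t$ makes the multiplication map $R\otimes_{R^t}R(A)^{\sigma}\to R(A)$ a bimodule isomorphism, and the two eigenspace summands of $R\otimes_{R^t}R(A)$ give the two shifted copies of $R(A)$). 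Injectivity of $\alpha_t\cdot$ and the component-wise divisibility of an anti-invariant $f$ by $x^{-1}\alpha_t$ on each $\mathrm{Gr}(x)$ are also fine.

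The gap is exactly where you flag it, and the repair you propose does not close it. For a union of three or more closed subvarieties, a tuple $(g_x)$ of regular functions agreeing on all pairwise intersections need \emph{not} come from a regular function on the union: already for three concurrent lines in the plane --- which is a union of graphs of linear maps of a one-dimensional space --- the space of pairwise-compatible tuples has dimension $3$ in degree one while $\mathcal{O}$ of the union has dimension $2$ there. So ``verify the compatibility conditions on all pairwise intersections'' is a necessary but not sufficient criterion, and reflection faithfulness by itself cannot convert it into a sufficient one; you would be characterizing the image of $R(A)\hookrightarrow\prod_{x\in A}\mathcal{O}(\mathrm{Gr}(x))$ by conditions that in general cut out something strictly larger. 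What is needed is a genuinely global input about $R(A)$, for instance that it is free as a one-sided graded $R$-module and admits a filtration with subquotients shifted copies of the $R_x$, $x\in A$, which can be arranged compatibly with the pairing $x\leftrightarrow tx$; the divisibility of anti-invariants by $\alpha_t$ is then proved on the two-component pieces $\mathcal{O}(\mathrm{Gr}(x)\cup\mathrm{Gr}(tx))$, where pairwise gluing really is sufficient, and propagated by induction along the filtration (equivalently, one computes the graded character of $\sigma$ on $R(A)$). This is precisely the machinery of \cite[Section 4]{S} and \cite[Section 4.3]{Will} that the present paper invokes by citing the lemma rather than reproving it; without it, or some equivalent dimension count, your proof is incomplete at its crucial step.
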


From now we assume $(W, S)$ to be dihedral, that is, that $|S|=2$. In that case there holds the following Lemma, which will allow us to parametrize the indecomposable bimodules in $\mathcal{C}$ in case $(W,S)$ is of type $A_2=I_2(3)$.

\begin{lem}[{\cite{S}}]\label{Soergel} 
Let $A\subseteq W$, $t,s\in T$ such that $t A=A$ and $|A\backslash \left(A\cap sA\right)|=2$. Then 
$$R\otimes_{R^s} R(A)\cong R\left(A\cup sA\right)\oplus R\left(A\cap sA\right)(-2).$$
\end{lem}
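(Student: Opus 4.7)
The approach is to construct two explicit embeddings realising the two summands on the right-hand side, then assemble them via a graded rank comparison and Krull--Schmidt.

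For the first summand, the scheme projection $\tilde Z:=V\times_{V/s}Z_A\twoheadrightarrow Z_{A\cup sA}$ obtained by forgetting the middle copy of $V$ is surjective (given $(u,v_2)\in Z_{A\cup sA}$ one has $u=yv_2$ for some $y\in A\cup sA$, which always lifts to $v_1=yv_2$ or $v_1=syv_2$ according to whether $y\in A$ or $y\in sA$). Pulling back regular functions produces an injective morphism $\iota:R(A\cup sA)\hookrightarrow \mathcal{O}(\tilde Z)=R\otimes_{R^s}R(A)=:M$ of graded $R$-bimodules.

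For the second summand, observe that the restriction surjection $R(A)\twoheadrightarrow R(A\cap sA)$ induces a surjection $M\twoheadrightarrow R\otimes_{R^s}R(A\cap sA)$, and the latter is $R(A\cap sA)\oplus R(A\cap sA)(-2)$ by Lemma~\ref{stable} applied to the $s$-stable set $A\cap sA$. The plan is to split the projection to the shifted factor by an injection $\kappa:R(A\cap sA)(-2)\hookrightarrow M$ sending a generator $g$ of $R(A\cap sA)(-2)$ to an element of the form $(\alpha_s\otimes 1-1\otimes\alpha_s)\cdot(1\otimes \tilde g)$ in $M$, where $\tilde g\in R(A)$ is a carefully chosen lift; the $t$-stability of $A$ should make such a lift canonical up to the inessential ambiguity measured by the kernel of $R(A)\twoheadrightarrow R(A\cap sA)$.

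To conclude, I would verify that $\iota\oplus\kappa$ is an isomorphism by matching graded ranks: using $R\otimes_{R^s}R\cong R\oplus R(-2)$ as left $R$-modules, the graded $R$-rank of $M$ equals $(1+t^2)\,\mathrm{grk}_R R(A)$, while an inclusion--exclusion applied to the pairwise graph intersections in $A$, $A\cup sA$, $A\cap sA$ (tractable thanks to $|A\setminus(A\cap sA)|=2$) shows this equals $\mathrm{grk}_R R(A\cup sA)+t^2\,\mathrm{grk}_R R(A\cap sA)$. Since $R(A\cup sA)$ and $R(A\cap sA)$ are indecomposable (each generated by its one-dimensional degree-zero part, as recorded after \eqref{eq:4}), Krull--Schmidt in the Karoubian category $\mathcal{C}$ then forces $\iota\oplus\kappa$ to be an isomorphism.

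The principal obstacle is the construction of $\kappa$: a priori the element $(\alpha_s\otimes 1-1\otimes\alpha_s)\cdot(1\otimes\tilde g)$ depends on the lift $\tilde g$, and one must show that the dependence is controlled precisely enough to define $\kappa$ on $R(A\cap sA)(-2)$ unambiguously. This is where both hypotheses are used: the $t$-stability of $A$ provides the symmetry under which a preferred (for example, $t$-symmetrised) lift exists, and the cardinality condition $|A\setminus(A\cap sA)|=2$ pins down the rank of the lifting ambiguity to a manageable two-dimensional piece.
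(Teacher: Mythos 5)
Your plan does not follow the paper's route, and as it stands it has genuine gaps. The paper's proof is essentially a reduction: it observes that Soergel's proof of \cite[Proposition 4.6]{S} uses only one structural property of $A$, namely that $A\backslash(A\cap sA)$ is a $t'$-stable set of cardinality $2$ for some reflection $t'\neq s$, and then verifies this property combinatorially. The verification is the whole content: since $tA=A$ and $W$ is dihedral, exactly half the elements of $A$ are reflections (odd length); the same holds for the $s$-stable set $A\cap sA$; hence the two elements $x,y$ of $A\backslash(A\cap sA)$ have opposite length parities, so $t':=yx^{-1}$ has odd length and is therefore a reflection, with $t'\{x,y\}=\{x,y\}$. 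Your proposal never isolates or proves this fact, and it is exactly what your two unfinished steps would require.

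Concretely: (i) you flag the construction of $\kappa$ as "the principal obstacle" but do not overcome it --- saying that $t$-stability "should make such a lift canonical" is not an argument, and the degree-$2$ element you propose only has a chance of generating a free shifted copy because the residual piece $R\big(A\backslash(A\cap sA)\big)$ is itself of the form $R(\{x,t'x\})$, i.e.\ a twist of a $B_{t'}$; (ii) the graded-rank identity $\mathrm{grk}\,R(A\cup sA)+t^2\,\mathrm{grk}\,R(A\cap sA)=(1+t^2)\,\mathrm{grk}\,R(A)$ is not a formal inclusion--exclusion. The graded dimension of $R(B)$ depends on the scheme-theoretic intersection pattern of the graphs $\mathrm{Gr}(x)$, $x\in B$, and $\mathrm{Gr}(x)\cap\mathrm{Gr}(y)$ has codimension $1$ in each graph when $yx^{-1}$ is a reflection but codimension $2$ when it is a rotation; so the identity hinges on knowing which pairs of elements of $A$ differ by reflections, which again comes back to the parity argument above. (Your $\iota$ is fine, and the overall architecture of two embeddings plus a rank count is in the spirit of Soergel's original proof, so the direction is not wrong --- but the proof is incomplete precisely at the points where the hypotheses $tA=A$ and $|A\backslash(A\cap sA)|=2$ must actually be used.) The efficient fix is to prove the $t'$-stability statement and then invoke \cite[Proposition 4.6]{S} as the paper does, rather than rebuilding the isomorphism from scratch.
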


\begin{proof}
One can give exactly the same proof as in \cite[Proposition 4.6]{S}: in the
proof there, one considers sets $A$ of the form $A = \{y\in W~|~y \leq x\}$ where $\leq$
is the Bruhat order and uses the fact that $A\backslash \left(A\cap sA\right)=\{x, t'x\}$ for some $t'\in T$ with $t'\neq s$ (Soergel assumes $s\in S$). But by looking at the proof one sees that it
works for any $s\in T$ and any $A$ such that $A\backslash (A\cap sA)$ is a $t'$ -stable subset of cardinality 2
for some reflection $t'\neq s$, which can be easily observed in our case: since $A= tA$, we have $|A|=2k$ for some $k\geq 1$ and exactly half of the elements in $A$ have odd length with respect to the generating set $S$ of $W$ (equivalently, are reflections). But since $A\cap sA$ is $s$-stable, the same holds for $A\cap sA$, implying that among the two element $x,y$ of $A\backslash (A\cap sA)$, exactly one, say $x$, has odd length, while $y$ has even length. It implies that $t':=yx^{-1}$ has odd length, hence is a reflection, which shows the claim. 
\end{proof}

\section{Parametrizing the indecomposable objects in type $A_2$}

In this section, we assume $(W,S)$ to be of type $A_2$. The aim of this section is to identify the indecomposable bimodules in $\mathcal{C}$ in this case, using Lemmas~\ref{stable} and \ref{Soergel}. To this end, consider the set $$\mathcal{X}:=\{ \emptyset\neq A\subseteq W~|~\exists t\in T: t A=A\}.$$

There are $19$ elements belonging to $\mathcal{X}$, given by $W$, $3$ $t$-stable subsets of cardinality $2$ for each $t\in T=\{ t_1, t_2, t_3\}$, and their complements. 

\begin{prop}\label{prop:indec}
The unshifted indecomposable bimodules in $\mathcal{C}$ are given by $R(A)$, $A\in \mathcal{X}$, and $R$. In particular, the $\mathbb{Z}[v^{\pm 1}]$-algebra $A(W):=\langle \mathcal{C} \rangle$ (where $v$ acts by a grading shift $(1)$) has rank $20$ as a free $\mathbb{Z}[v^{\pm 1}]$-module.
\end{prop}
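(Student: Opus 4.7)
The plan is to show that the set $\{R(A) : A \in \mathcal{X}\} \cup \{R\}$ exhausts the unshifted indecomposable objects of $\mathcal{C}$, which by Krull--Schmidt immediately yields the claim on the rank of $A(W)$. Indecomposability of each $R(A)$ for $A \in \mathcal{X} \cup \{\{e\}\}$ follows from the observation already made in Section~2: its one-dimensional degree-zero component generates the whole bimodule, so in any decomposition $R(A) = M \oplus N$ one of the summands has trivial degree-zero component and must vanish. Non-isomorphism between distinct $R(A)$'s is also clear, as the support of $R(A)$ on $V \times V$ recovers $A$.

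The core technical step is closure. I would induct on the length $k$ of a tensor product $M = B_{t_1} \otimes_R \cdots \otimes_R B_{t_k}$, showing that $M$ decomposes into grading shifts of $R(A')$ with $A' \in \mathcal{X} \cup \{\{e\}\}$. The base case $k = 0$ yields $R$. For the inductive step, take a summand $R(A)$ and compute $R(A) \otimes_R B_t \cong R(A) \otimes_{R^t} R(1)$. If $At = A$, the right-sided analogue of Lemma~\ref{stable} gives $R(A)(1) \oplus R(A)(-1)$. If $At \neq A$, the right-sided analogue of Lemma~\ref{Soergel} applies and yields $R(A \cup At)(1) \oplus R(A \cap At)(-1)$, provided $A$ has a right-reflection stabilizer and $|A \setminus (A \cap At)| = 2$. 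Both hypotheses hold for every $A \in \mathcal{X}$ in type $A_2$: an $A$ of cardinality $2$ or $4$ is right-stabilized by a reflection (for $A = \{x, xr\}$ this is $r$ itself, and the argument for its four-element complement is symmetric), and the dichotomy $|B \cap Bt| \in \{0, 2\}$ for any two-element subset $B \subseteq W$ forces $|A \cap At| \in \{0, 2\}$, so the intersection hypothesis is automatic. One then checks that $A \cup At$ and $A \cap At$ both lie in $\mathcal{X} \cup \{\emptyset\}$, as they inherit the left-reflection stabilizer of $A$.

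Once closure is established, the list of $20$ candidates gives an upper bound on the number of indecomposables up to shift. To see the bound is attained, I exhibit each candidate as a summand of some Bott--Samelson-style product. The three $R(\{e, t\}) = B_t(-1)$ for $t \in T$ cover the two-element sets containing $e$. Direct computation of $B_t B_s$ for distinct $t, s \in T$ yields $R(\{e, t, s, ts\})(2)$, producing the six four-element sets containing $e$. A threefold product such as $B_{s_1} B_{t_3} B_{s_2}$, where $S = \{s_1, s_2\}$ and $t_3 = s_1 s_2 s_1$ is the third reflection, decomposes via the closure step as $R(W)(3) \oplus R(\{t_3, s_2 s_1\})(1)$, yielding $R(W)$ and a two-element set not containing $e$; by the symmetry of the indexing the six two-element sets not containing $e$ all arise analogously. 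Tensoring each such $R(A)$ (with $|A| = 2$, $e \notin A$) once more with a suitable $B_t$ produces the three remaining four-element sets not containing $e$. Finally, $R = R(\{e\})$ is seen to appear via the inclusion $\mathcal{B} \subseteq \mathcal{C}$, as $R$ is the Soergel bimodule $B_e$ of $\mathcal{B}$ associated to the identity of $W$.

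The main obstacle is the bookkeeping in the closure step: verifying, case by case on $|A| \in \{2, 4, 6\}$ and on $t \in T$, that applying Lemma~\ref{Soergel} stays within the family $\{R(A') : A' \in \mathcal{X} \cup \{\emptyset, \{e\}\}\}$. This rests on the special combinatorics of type $A_2$ --- notably $|W| = 6$, the existence of both a left- and a right-reflection stabilizer for every $A \in \mathcal{X}$, and the dichotomy $B \cap Bt \in \{\emptyset, B\}$ for two-element $B$ --- and would genuinely break down in larger types, consistent with the paper's remark that $\langle \mathcal{C} \rangle$ is not even known to be finitely generated in types $A_3$ or $B_2$.
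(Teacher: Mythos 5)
Your proposal is correct and follows essentially the same route as the paper: an upper bound of $20$ obtained by showing that tensoring any $R(A)$, $A\in\mathcal{X}$, with a $B_t$ stays within the family $\{R(A')\}_{A'\in\mathcal{X}}$ via Lemmas~\ref{stable} and~\ref{Soergel} (the paper packages the needed combinatorial check as Lemma~\ref{tech}; you work on the right instead of the left, which is fine since every $A\in\mathcal{X}$ indeed also has a right reflection stabilizer), followed by exhibiting each candidate as a summand of an explicit product. The only loose point is your justification that $|A\cap At|=2$ when $|A|=4$ and $At\neq A$ --- this does not follow from the two-element dichotomy as stated, but rather from $A\cap At$ being right-$t$-stable (hence of even cardinality) together with $|A|+|At|=8>|W|$ --- a harmless repair.
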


Before establishing the above Proposition we prove:

\begin{lem}\label{tech}
Let $A\in\mathcal{X}$, $t\in T$. Then either $t A=A$, or $A\backslash (A\cap tA)$ is a $t'$-stable subset of cardinality $2$ for some $t'\in T$. 
\end{lem}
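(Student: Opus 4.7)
My plan is to exploit length parity in the Coxeter group $W$ of type $A_2$, in which the three reflections of $T$ coincide with the three odd-length elements. First, since $A\in\mathcal{X}$ is stabilized by some reflection and left multiplication by a reflection swaps length parity, $A$ must have even cardinality with as many even- as odd-length elements; given $|W|=6$, this forces $|A|\in\{2,4,6\}$.

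Next I would note that $B:=A\cap tA=\{x\in A:\ tx\in A\}$ is automatically $t$-stable (if $x\in B$, then $t(tx)=x\in A$, so $tx\in B$). Applying the parity principle to both $A$ and $B$, the complement $A\setminus B$ inherits the property of containing equally many elements of each length parity; in particular $|A\setminus B|$ is even. A short case analysis on $|A|$ confines this value to $\{0,2\}$: the case $|A|=6$ does not arise since $A=W$ is trivially $t$-stable; if $|A|=2$, then $A\setminus B\subseteq A$ has even size at most $2$; and if $|A|=4$, then $|A\setminus B|=4$ would force $A\cap tA=\emptyset$ and hence $|A\cup tA|=8>|W|$, a contradiction. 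So $|A\setminus B|=0$ exactly when $tA=A$, and equals $2$ otherwise.

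Assuming $tA\neq A$, write $A\setminus B=\{x,y\}$. The parity count then shows that $x$ and $y$ have opposite length parities, so $t':=yx^{-1}$ has odd length and is therefore a reflection in $T$; as $t'$ is an involution with $t'x=y$, it also satisfies $t'y=x$, whence $t'\{x,y\}=\{x,y\}$, as desired. I do not anticipate a serious obstacle: the only step genuinely specific to type $A_2$ is the cardinality bound ruling out $|A\setminus B|=4$ when $|A|=4$, which uses $|W|=6$ essentially; this is presumably why the argument (and the approach of the whole section) does not generalize cleanly to type $A_3$ or $B_2$, in line with the remarks of the introduction.
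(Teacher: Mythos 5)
Your proof is correct, and every step checks out: the fixed-point-freeness of left multiplication by a reflection gives the balanced length-parity count in any reflection-stable set; $A\cap tA$ is indeed $t$-stable; the cardinality analysis correctly pins $|A\setminus(A\cap tA)|$ to $2$ when $tA\neq A$ (with $|W|=6$ used only to exclude the value $4$); and $yx^{-1}$ is odd-length, hence a reflection in the dihedral group, stabilizing $\{x,y\}$. However, your route differs from the paper's own proof of this lemma. The paper fixes a reflection $s$ with $sA=A$, writes $A=\{x,sx,y,sy\}$ in the four-element case, and runs an explicit case analysis on where $tx$ can land ($tx=y$, $tx=sx$, or $tx=sy$), identifying the stabilizing reflection concretely as $t'=sts$ in each surviving case. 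What you do instead is essentially transplant the parity argument that the paper uses in its proof of Lemma~\ref{Soergel} (where it shows that $A\setminus(A\cap sA)$ is $t'$-stable for a $t$-stable $A$) and apply it directly here, which makes the proof uniform across the cases $|A|=2$ and $|A|=4$ and avoids enumeration; the trade-off is that you do not exhibit $t'$ explicitly, but the statement does not require that. One small point worth making explicit if you polish this up: for the intended application to Lemma~\ref{Soergel} one wants $t'\neq t$, and this does follow from your setup (if $t'=t$ then $tx=y\in A$ would put $x$ in $A\cap tA$), but it is worth a sentence.
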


\begin{proof}
Assume that $tA\neq A$. By definition of $\mathcal{X}$, there is $s\in T$ such that $s A=A$. If $|A|=2$, then since $s\neq t$ we have $A\cap t A=\emptyset$ and we are done. If $|A|=4$, then $|A\cap tA|=2$ since $A\neq tA$ and $|W|=6$. 

Write $A=\{x, sx, y, sy\}$ and assume without loss of generality that $x\in A\cap tA$. If $x=ty$, then $y = tx$ and $A\backslash \left(A\cap tA\right)=\{sx, sy\}$ is $sts$-stable as $(sts)sx=sy$. If $x=tsx$, then $s=t$ and $A$ is $t$-stable, a contradiction. If $x=tsy$, then $sy=tx$ and $A\backslash (A\cap tA)=\{sx, y\}$ is $sts$-stable as $(sts)sx=y$.  
\end{proof}

As an immediate Corollary of \ref{stable}, \ref{tech}, \ref{Soergel}, we have that for all $t\in T$ and $A\in\mathcal{X}$, the tensor product $B_t\otimes_R R(A)$ decomposes as a direct sum of $R(B)$, $B\in\mathcal{X}$. Together with the fact that $B_t\cong R(\{e, t\})(1)$ for all $t \in T$, we deduce that $\mathcal{C}$ has at most $20$ unshifted indecomposable bimodules, and it remains to prove that $R(A)$ occurs as a an indecomposable bimodule of $\mathcal{C}$ for every $A\in\mathcal{X}$. 

\begin{proof}[Proof of Proposition~\ref{prop:indec}]
We have to show that for all $A\in\mathcal{X}$, $R(A)$ occurs as a direct summand of (a shift of) $B_{t_1} B_{t_2}\cdots B_{t_k}$ for some $t_i\in T$. Since $A \in \mathcal{X}$, let us fix $t \in T$ such that $tA = A$.

Assume that $|A|=2$. If $A=\{e, t\}$, then $R(A)=B_t(-1)$ in which case we are done. If $A\neq\{e, t\}$, then $A$ is necessarily of the form $A=\{t_1, tt_1\}$ for $t, t_1\in T$, $t\neq t_1$ (in particular, we have $tt_1t=t_1 t t_1$). Applying twice Lemma~\ref{Soergel} we have first $B_{t_1} B_{t_1 t t_1}\cong R\left(\{t_1, t_1tt_1, e, t t_1\}\right)(2)$ and then
$$B_t R\left(\{ t_1, t_1tt_1, e, tt_1\}\right)\cong R(W)(1)\oplus R\left(\{t_1, tt_1\}\right)(-1),$$
which shows that $R(A)$ and $R(W)$ both appear as a summand of a shift of $B_t B_{t_1} B_{t_1 t t_1}$.

Assume that $|A|=4$. Write $A=\{tx, x, ty, y\}$. If $e\in A$, then without loss of generality we can assume that $x=e$ and $y=t_1\in T$ with $t_1 \neq t$. We then have $R(A)\cong B_t B_{t_1}(-2)$ by Lemma~\ref{Soergel}. If $e\notin A$ then we can assume without loss of generality that $x=t_1\in T$ and $y= t t_1 t$. We then have by Lemma~\ref{Soergel} that $R(A)\cong B_t\otimes R(\{t_1, t_1t\})(-1)$. But, as already proved, the bimodule associated to the $(t_1tt_1)$--stable set $\{t_1, t_1t\}$ of cardinality $2$ has to appear as a summand of $B_{t_1 t t_1} B_{t_1} B_t(-1)$, hence $R(A)$ appears as a summand of $B_t B_{t_1 t t_1} B_{t_1} B_t(-2)$.     
\end{proof}

By Relation~\eqref{eq:4}, tensoring a $B_t$ by an $R_w$ gives an indecomposable bimodule $R(A)$ which by Proposition~\ref{prop:indec} lies in $\mathcal{C}$. As a consequence we get:

\begin{lem}
The unshifted indecomposable bimodules in $\mathcal{C}^{\mathrm{ext}}$ are given by the unshifted indecomposable ones in $\mathcal{C}$ and the $R_w$ for $w\in W\backslash\{e\}$. In particular, the split Grothendieck ring $\langle \mathcal{C}^{\mathrm{ext}}\rangle$ has rank $25$. 

\end{lem}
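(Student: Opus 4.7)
The plan is to reduce the classification of unshifted indecomposables in $\mathcal{C}^{\mathrm{ext}}$ to the analogous one in $\mathcal{C}$ (Proposition~\ref{prop:indec}), by using the commutation relation \eqref{eq:2} to push every $R_w$-factor to one side of any monomial in the generators.

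First I would observe that each $R_w$ is itself indecomposable. Indeed $R_w\cong R(\{w\})=\mathcal{O}(\mathrm{Gr}(w))$ is generated by any nonzero element of its one-dimensional degree zero component, so the argument given in Section~2 for $R(A)$ applies verbatim. Hence the $20$ indecomposables of $\mathcal{C}$ together with the $R_w$, $w\in W\setminus\{e\}$, are all indecomposable in $\mathcal{C}^{\mathrm{ext}}$.

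The main step is showing that no further indecomposables appear. Any object of $\mathcal{C}^{\mathrm{ext}}$ is a direct summand of a finite direct sum of shifts of monomials in the generators $B_t$ ($t\in T$) and $R_w$ ($w\in W$). Applying \eqref{eq:2} repeatedly (which rewrites $R_w\otimes_R B_t$ as $B_{wtw^{-1}}\otimes_R R_w$), together with $R_w\otimes_R R_{w'}\cong R_{ww'}$, every such monomial becomes isomorphic to one of the form $B_{t'_1}\otimes_R\cdots\otimes_R B_{t'_k}\otimes_R R_{w'}$. By Proposition~\ref{prop:indec}, the left factor decomposes as a direct sum of shifts of $R$ and of the $R(A)$ for $A\in\mathcal{X}$, and then \eqref{eq:4} rewrites the corresponding summands, after tensoring on the right by $R_{w'}$, as $R_{w'}$ and $R(Aw')$ respectively. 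Since $tA=A$ implies $t(Aw')=Aw'$, one has $Aw'\in\mathcal{X}$ whenever $A\in\mathcal{X}$, so the output indeed lies in the claimed list.

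To finish, I would check that these $20+5=25$ bimodules are pairwise non-isomorphic as unshifted bimodules: the $(R\otimes_{\mathbb{R}} R)$-support of $R(A)$ is the closed subscheme $\bigcup_{x\in A}\mathrm{Gr}(x)\subseteq V\times V$, which determines the subset $A\subseteq W$ uniquely. I do not expect a serious obstacle; the only care needed is in the inductive commutation-and-merging argument that brings an arbitrary monomial to the standard form $(\prod_i B_{t'_i})\otimes_R R_{w'}$, and this is a straightforward induction on the number of $R_w$-factors appearing in the initial monomial.
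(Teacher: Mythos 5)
Your proof is correct and follows essentially the same route as the paper, whose (very terse) justification is that by \eqref{eq:4} any product of a $B_t$ with an $R_w$ is an indecomposable $R(A)$ with $A\in\mathcal{X}$, hence lies in $\mathcal{C}$, so every monomial containing at least one $B_t$ is already an object of $\mathcal{C}$ and Proposition~\ref{prop:indec} applies. Your variant --- pushing the $R_w$'s to the right via \eqref{eq:2}, decomposing the $B$-part in $\mathcal{C}$, and twisting the summands by $R_{w'}$ using \eqref{eq:4} --- is the same reduction organized slightly differently, and your explicit checks that the $R_w$ are indecomposable and pairwise non-isomorphic to the $R(A)$ (via supports) supply details the paper leaves implicit.
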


\section{A presentation by generators and relations}

In this section, we give a presentation by generators and relations of $A(W)=\langle \mathcal{C} \rangle$ in type $A_2$. Let $(W,S)$ be of type $A_2$, with set of reflections $T=\{t_1, t_2, t_3\}$.  

\begin{thm}
The algebra $A(W)$ is generated as $\mathbb{Z}[v^{\pm 1}]$-algebra by $C_i$, $i=1,2,3$ with relations
\begin{enumerate}
\item $C_ i^2=(v+v^{-1}) C_i,~\forall i=1,2,3,$
\item $C_i C_j C_i + C_j = C_i + C_j C_i C_j,~\forall i\neq j,$
\item $C_i C_j C_i= C_i C_k C_i,~\text{if }\{i,j,k\}=\{1,2,3\}$,
\item $C_i C_j C_k C_i=C_i C_k C_j C_i,~\text{if }\{i,j,k\}=\{1,2,3\}.$
\end{enumerate}
For all $i$ we have $C_i=\langle B_{t_i}\rangle$ and $R(1)=v$. 
\end{thm}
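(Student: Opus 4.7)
The plan is to show that the abstract $\mathbb{Z}[v^{\pm 1}]$-algebra $\widetilde{A}$ with the stated presentation maps isomorphically onto $A(W)$ via $C_i \mapsto \langle B_{t_i}\rangle$. Since $A(W)$ has rank $20$ by Proposition~\ref{prop:indec}, it is enough to verify that the four families of relations hold in $A(W)$, that the resulting homomorphism is surjective, and that $\widetilde{A}$ itself has $\mathbb{Z}[v^{\pm 1}]$-rank at most $20$.

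Checking the relations uses only the tools of Section~2. Relation (1) is the standard quadratic relation $B_{t_i} B_{t_i} \cong B_{t_i}(1) \oplus B_{t_i}(-1)$, whose usual proof for simple reflections applies verbatim to any reflection since $R$ is free of rank~$2$ over $R^{t_i}$. For relation (2), in type $A_2$ any two distinct reflections $t_i, t_j$ generate the whole group and $W$ acts transitively by conjugation on unordered pairs of reflections; picking $w$ with $w\{s_1,s_2\}w^{-1} = \{t_i,t_j\}$ and conjugating Soergel's dihedral relation for the simple reflections via~\eqref{eq:2} produces the stated identity between $C_i$ and $C_j$. Relations (3) and (4) are direct instances of~\eqref{eq:3} with $s=t_i$, combined with the identity $t_i t_j t_i = t_k$ holding in $S_3$ whenever $\{i,j,k\}=\{1,2,3\}$: this immediately gives $B_{t_i}B_{t_j}B_{t_i} \cong B_{t_i}B_{t_k}B_{t_i}$ and $B_{t_i}B_{t_j}B_{t_k}B_{t_i} \cong B_{t_i}B_{t_k}B_{t_j}B_{t_i}$. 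Surjectivity of $\widetilde{A}\to A(W)$ follows from the proof of Proposition~\ref{prop:indec}, where each indecomposable $R(A)$ is exhibited as a direct summand of an explicit product of $B_{t_i}$'s; iterating Lemmas~\ref{stable} and~\ref{Soergel} then writes its class as a $\mathbb{Z}[v^{\pm 1}]$-combination of monomials in the $C_i$'s.

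For the rank bound on $\widetilde{A}$, the idea is to build an explicit $\mathbb{Z}[v^{\pm 1}]$-spanning set of cardinality at most $20$. Relation (1) removes adjacent repeats; (3) identifies $C_iC_jC_i$ with $C_iC_kC_i$; (2) relates $C_iC_jC_i$ to $C_jC_iC_j$ modulo length-one terms; (4) equates the two length-four palindromes with outer letter $C_i$. An induction on length, appending a single $C_j$ on the right and invoking these reductions, should collapse every monomial to a $\mathbb{Z}[v^{\pm 1}]$-combination of a fixed list of $20$ representatives, which will precisely correspond to the indecomposable classes of Proposition~\ref{prop:indec}. The main obstacle is verifying that such a list closes under right multiplication by any $C_j$, so that no genuinely new element can appear at length five or beyond; this amounts to a concrete but somewhat tedious case check that has to be pushed through carefully. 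Once the list has been produced, matching its $20$ elements with the indecomposables $\langle R(A)\rangle$ for $A\in\mathcal{X}\cup\{\{e\}\}$ simultaneously certifies spanning and linear independence, completing the proof that $\widetilde{A}\cong A(W)$.
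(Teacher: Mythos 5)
Your overall strategy coincides with the paper's: verify the four relations in $\langle\mathcal{C}\rangle$ (so that $A(W)$ is a quotient of the presented algebra $\widetilde{A}$), note surjectivity, and then bound the rank of $\widetilde{A}$ by $20$ so that the comparison with Proposition~\ref{prop:indec} forces an isomorphism. Your verification of the relations is essentially the paper's: relation (1) via freeness of $R$ over $R^{t_i}$ (the paper cites Lemma~\ref{stable}), relation (2) by conjugating the dihedral relation for $S$ through $R_w$ using \eqref{eq:2} together with transitivity of $W$ on pairs of reflections (the paper offers exactly this route as one of two options), and relations (3), (4) as instances of \eqref{eq:3} combined with $t_it_jt_i=t_k$ in $S_3$. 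All of that is sound.

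The genuine gap is in the last step: you never produce the spanning set, and you explicitly defer the closure check ("a concrete but somewhat tedious case check that has to be pushed through carefully"). That case check \emph{is} the proof of the rank bound, and it occupies the bulk of the paper's argument; a plan to do it is not a proof that it can be done. Concretely, the list is $1$, the three $C_i$, the six $C_iC_j$ ($i\neq j$), the single palindrome $C_1C_2C_1$ (relations (2) and (3) together identify every $C_iC_jC_i$ with $C_1C_2C_1$ modulo length-one terms --- this collapse is not automatic and must be stated), the six $C_iC_jC_k$ with $\{i,j,k\}=\{1,2,3\}$, and the three words $C_1C_2C_3C_1$, $C_2C_1C_3C_2$, $C_3C_1C_2C_3$ (relation (4) cuts the six length-four candidates down to three). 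The closure under right multiplication then reduces to words of length $\leq 5$, the critical case being $C_iC_jC_kC_iC_m$ with $\{i,j,k\}=\{1,2,3\}$ and $m\in\{j,k\}$: e.g.\ for $m=k$ one applies relation (3) to the last three letters to get $C_iC_jC_kC_jC_k$ and then uses (1) and (2) to rewrite $C_jC_kC_jC_k$ in lower length. Without exhibiting these reductions your rank bound is unsubstantiated. A minor further point: your final "matching" of the $20$ monomials with the $20$ classes $\langle R(A)\rangle$ is unnecessary --- once $\widetilde{A}$ is generated by $20$ elements as a $\mathbb{Z}[v^{\pm1}]$-module and surjects onto the free module $A(W)$ of rank $20$, the surjection splits and a Nakayama argument kills the kernel, which is how the paper concludes.
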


\begin{rmq} Note that the algebra defined above is a quotient of the Hecke algebra $\mathcal{H}( W_{\widetilde{A_2}})$ of the affine Weyl group of type $\widetilde{A_2}$, as the Kazhdan-Lusztig presentation of this Hecke algebra has three generators satisfying precisely the Relations $(1)$ and $(2)$ above. 
\end{rmq}

\begin{proof}
We first show that the above relations are satisfied in $\mathcal{C}$: the first relation is a consequence of Lemma~\ref{stable}. The second relation holds in Soergel's category $\mathcal{B}$ in case $\{ t_i, t_j\}=S$ but it generalizes here using either Lemma~\ref{Soergel} or by conjugating the relation in Soergel's category by an $R_s$, $s\in S$ and using Relations~\eqref{eq:1}--\eqref{eq:3}. The last two relations are just a particular case of Relation~\eqref{eq:3}. This shows that $\langle \mathcal{C} \rangle$ is a quotient of $A(W)$. Since by Proposition~\ref{prop:indec} we know that $\langle\mathcal{C}\rangle$ is free of rank $20$, it suffices to show that the algebra defined by the above presentation is $\mathbb{Z}[v^{\pm 1}]$-linearly spanned by a set of $20$ elements. We show that every monomial in the $C_i$'s can be expressed as a linear combination of the $20$ elements $1$, $C_1$, $C_2$, $C_3$, $C_1 C_2$, $C_2 C_1$, $C_2C_3$, $C_3C_2$, $C_1 C_3$, $C_3 C_1$, $C_1 C_2 C_1$, $C_i C_j C_k$ with $\{i,j,k\}=\{1,2,3\}$, $C_1 C_2 C_3 C_1$, $C_2 C_1 C_3 C_2$ and $C_3 C_1 C_2 C_3$. To this end, it suffices to show that any word of length at most $5$ in the $C_i$'s is a linear combination of these $20$ elements. 

For words of length at most two it is clear since $C_i^2=(v+v^{-1})C_i$. Given a word $C_i C_j C_k$ of length $3$, if $|\{i,j,k\}|=3$ then our word is an element of the above list. If two consecutive letters are the same, then we are done using again the quadratic relation. If $i=k, j\neq i$, then using the second and third relation we can express our word as a linear combination of $C_1 C_2 C_1$ and $C_1, C_2, C_3$. Now consider a word $C_i C_j C_k C_\ell$ of length $4$. If $i=\ell$, then if $|\{i,j,k\}|=3$, up to permuting the two middle letters using the fourth relation our word belongs to the above list. If $|\{i,j,k\}|\neq 3$, then two consecutive letters in the word have to agree, hence we can apply a quadratic relation to express our word as a linear combination of words of length three for which we already shown the result. Assume that $i\neq \ell$. Then either a quadratic relation can be applied, or the word is $C_i C_j C_i C_\ell$ or $C_i C_\ell C_k C_\ell$, with $|\{i,j,\ell\}|=3$ (resp. $|\{i,k,\ell\}|=3$). Assume that the word is $C_i C_j C_i C_\ell$, the other case is symmetric. Then applying the third relation we get $C_i C_\ell C_i C_\ell$ which using the first two relations can be expressed as a linear combination or words of length at most three. Now if $C_i C_j C_k C_\ell C_m$ is a word of length five, then we can assume that $\ell=i$, otherwise we already saw that the word $C_i C_j C_k C_\ell$ is a linear combination of words of length at most three: it implies that $C_i C_j C_k C_\ell C_m$ is a linear combination of words of length at most $4$, for which we already know the result. If $m=i$ then the word is $C_i C_j C_k C_i^2$ which can be reduced to a linear combination of words of length four by applying the quadratic relation. Note that we can assume $\{i,j,k\}=3$, otherwise we already saw that the word $C_i C_j C_k C_i$ can be expressed as a linear combination of words of length at most three. Assume that $m=k$, the case $m=j$ being similar after appling the relation $C_i C_j C_k C_i=C_i C_kC_jC_i$. Thanks to the third relation we have $$C_i C_j C_k C_i C_k= C_i C_j C_k C_j C_k,$$
and using the first two relations the word $C_j C_k C_j C_k$ can be expressed as a linear combination of words of smaller length. Hence $C_i C_j C_k C_i C_k$ can be expressed as a linear combination of words of length at most four, for which we have already shown the property. 
 
\end{proof}

\begin{rmq}\label{rmk:comb}
It can be observed by straightforward computations that any tensor product of the generators of $\mathcal{C}^{\mathrm{ext}}$, in case $W$ is of type $A_2$, is isomorphic to a bimodule of the form
\begin{equation}
R_w B_{s_1} \cdots B_{s_k} R_{w'}
\label{twBStw}
\end{equation}
with $w,w' \in W$ and $s_1, \cdots s_k$ simple reflections. Since the indecomposable objects in $\mathcal{B}$ are fully understood and tensoring with $R_w$ defines an invertible functor, the classification of indecomposables in $\mathcal{C}$ which we made in the previous section can also be derived from this fact (but the proof with rings of regular functions appears as more conceptual to us). In Section~\ref{sec:rmks}, we show that this property is not fulfilled in the category $\mathcal{C}^{\mathrm{ext}}$ attached to other Coxeter groups, by providing explicit counterexamples. 

\end{rmq}

\section{Some remarks about other Coxeter groups}\label{sec:rmks}

The Coxeter group of type $A_2$ is the smallest Coxeter group with a braid relation, and in many cases the situation in type $A_2$ gives a hint of a more general situation. In this section, we give a few examples observed in bigger groups, which show that the strategy used in type $A_2$ is too naive to work in higher rank and even for other dihedral groups. We actually do not know what the correct generalization of the algebra $A(W)$ should be. In type $A_2$, one can think of the added generator $B_t$ where $t$ is the non simple reflection as being associated to the highest root of $W$ viewed as a Weyl group, but we are not even able to describe the Grothendieck ring of the category generated by the $B_s, s\in S$ and $B_t$, where $t$ is the reflection associated to the highest root, in other cases.  

\subsection{Type $B_2$}

In Soergel's category $\mathcal{B}$, when $W$ is a dihedral group then every unshifted indecomposable bimodule is isomorphic to an $R(A)$ (see~\cite[Section 4]{S}). We have seen above that this stays true in $\mathcal{C}$ in the case where $W$ is of type $A_2$. We give a counterexample of this fact for the category $\mathcal{C}$ in the case where $W$ is of type $B_2$. Let $W$ be such a Coxeter group with $S=\{s, t\}$. Consider the bimodule $B:= B_{tst} B_s B_t$. Note that $B_s B_t\cong R(\{e,s,t,st\})(2)$ is a Soergel bimodule, but we cannot apply Lemma~\ref{Soergel} to decompose $B$. As a Soergel bimodule $B_s B_t$ possesses a standard and a costandard filtration (as defined in~\cite[Section 5]{S}). There is a short exact sequence $$0\longrightarrow R_{tst}(-1)\longrightarrow B_{tst} \longrightarrow R(1)\longrightarrow 0.$$
Tensoring this short exact sequence by $B_s B_t$ we see, using the isomorphism theorems, that $B$ has a filtration where (a shifted copy of) every $R_x$, $x\in W$, appears exactly once as subquotient (in fact, using twisted filtrations of $B_s B_t$ as in \cite{Twisted}, it is not difficult to show that $B$ has both a standard and a costandard filtration).

\begin{lem}
Let $w\in W$. Then $R_w B\cong B$. 
\end{lem}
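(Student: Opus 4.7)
The plan is to reduce to $w\in S=\{s,t\}$ (since $R_{w_1w_2}\cong R_{w_1}\otimes_R R_{w_2}$ lets one iterate) and then to shuttle $R_w$ through the tensor factors of $B=B_{tst}B_sB_t$ using \eqref{eq:1} and \eqref{eq:2}. Two structural facts about type $B_2$ drive the computation: the relation $(st)^4=e$ gives $stst=tsts$, hence $s\cdot tst\cdot s=tst$ (so $s$ and $tst$ commute in $W$); and directly $t\cdot tst\cdot t=s$.

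First, for $w=s$: by \eqref{eq:2} and the commutation $s\cdot tst\cdot s=tst$, I would slide $R_s$ past $B_{tst}$ to obtain $R_sB\cong B_{tst}R_sB_sB_t$. Applying \eqref{eq:1} in the form $R_s\otimes_R B_s\cong B_s$ to the inner factors then collapses this to $B_{tst}B_sB_t=B$.

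Next, for $w=t$, which is more delicate: iterating \eqref{eq:2} (using $t\cdot tst\cdot t=s$ and $t\cdot s\cdot t=tst$) shuttles $R_t$ to the right, and \eqref{eq:1} absorbs it into $B_t$ at the end:
$$R_tB\cong B_sR_tB_sB_t\cong B_sB_{tst}R_tB_t\cong B_sB_{tst}B_t.$$
It remains to identify $B_sB_{tst}B_t$ with $B=B_{tst}B_sB_t$, which reduces to establishing the commutation $B_sB_{tst}\cong B_{tst}B_s$.

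I expect this last step to be the main obstacle: Relation~\eqref{eq:3} does not apply directly since it requires matching outer factors, so the commutation is not a consequence of the basic relations among the generators of $\mathcal{C}$. The plan to overcome this is to invoke Lemma~\ref{Soergel} twice. With $A=\{e,tst\}$ (which is $tst$-stable) and the reflection $s$, one verifies $A\cap sA=\emptyset$ and $A\cup sA=\{e,s,tst,w_0\}$ where $w_0=stst=tsts$; the lemma then yields $B_sB_{tst}\cong R(\{e,s,tst,w_0\})(2)$. The symmetric application with $A'=\{e,s\}$ and the reflection $tst$ produces the same bimodule for $B_{tst}B_s$. Hence both orderings are isomorphic to the (indecomposable) bimodule $R(\{e,s,tst,w_0\})(2)$, giving $R_tB\cong B$ and completing the verification.
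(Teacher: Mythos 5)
Your proof is correct and follows essentially the same route as the paper's: reduce to $w\in S$, shuttle $R_s$ (resp.\ $R_t$) through the tensor factors via \eqref{eq:2}, absorb it via \eqref{eq:1}, and conclude with the commutation $B_sB_{tst}\cong B_{tst}B_s$. The paper derives that commutation from Lemma~\ref{Soergel} exactly as you do; you merely spell out the two applications of the lemma (both products being $R(\{e,s,tst,w_0\})(2)$) that the paper leaves implicit.
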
  

\begin{proof}
It suffices to show that $R_q B\cong B$ for all $q\in S$. We have 
$$R_s B_{tst} B_s B_t\cong B_{tst} R_s B_s B_t\cong B_{tst} B_s B_t\cong B,$$
where the first isomorphism follows from Relation~\eqref{eq:2} and the second one from Relation~\eqref{eq:1}. Similarly we have 
$$R_t B_{tst} B_s B_t\cong B_s R_t B_s B_t\cong B_s B_{tst} B_t\cong B_{tst} B_s B_t,$$
where the first isomorphism follows from Relation~\eqref{eq:2}, the second one from both~\eqref{eq:1} and \eqref{eq:2}, and the last one uses the fact that $B_{tst} B_s\cong B_s B_{tst}$ because $s$ and $tst$ commute (which follows for instance from Lemma~\ref{Soergel}). 
\end{proof}

It follows that if $B$ was decomposable, then tensoring on the left by any $R_w$ would permute the various summands, i.e., if $B_1$ is a summand, then $R_w B_1$ is also a summand. Using this property (for various $w$) together with the fact that a direct summand of a bimodule with a standard filtration also inherits a standard filtration, it is easy to check that $B$ is indecomposable. Now the only bimodule of the form $R(A)$ which has a filtration where each $R_x$ for all $x \in W$ appears as subquotient is $R(W)$, and comparing the graded dimensions of $R(W)$ and $B$ one sees that they do not coincide. This shows that there is an indecomposable bimodule in $\mathcal{C}$ which is not isomorphic to a ring of regular functions on a union of twisted diagonals.

It also shows that $B$ cannot be isomorphic to a (shift of a) bimodule of the form $R_w B' R_{w'}$ (see Remark~\ref{rmk:comb}), where $B'\in\mathcal{B}$ is an indecomposable Soergel bimodule and $w, w'\in W$, since by \cite[Section 4]{S} every indecomposable bimodule in $\mathcal{B}$ (and hence using Relations~\eqref{eq:4} every twist of it by $R_w$) is isomorphic to $R(A)$ for some $A\subseteq W$. 

\subsection{Type $A_3$}

Let $W$ be of type $A_3$, with $S = \{s,t,u\}$ such that $su = us$. 

It is well-known that already in $\mathcal{B}$, there are indecomposable bimodules which are not isomorphic to $R(A)$ for some $A\subseteq W$. For instance, the indecomposable Soergel bimodule $B$ associated to the group element $tsut$ has two shifted copies of $R_e$ in its standard filtration (equivalently the Kazhdan-Luzstig polynomial $h_{e, tust}$ is not a monomial) while an indecomposable Soergel bimodule of the form $R(A)$ has exactly one shifted copy of $R_x$, $x\in A$ appearing in its standard filtration. 

Consider the subcategory $\mathcal{C}'\subseteq\mathcal{C}$ generated by $B_s, B_t, B_u$ and $B_{sts}$. The subcategory $\mathcal{C}_1\subseteq \mathcal{C}'$ generated by $B_s, B_t$ and $B_{sts}$ is equivalent to the category of type $A_2$ which we described above, but the subcategory $\mathcal{C}_2$ generated by $B_{sts}, B_t$ and $B_u$ is not: for instance we have $B_t B_u B_t \not\cong B_t B_{sts} B_t$. Note that using Relations~\eqref{eq:1} and \eqref{eq:2} one sees that every subcategory of $\mathcal{C}_2$ generated by two of the generators is equivalent to a Soergel category of type $A_2$. It follows that $\langle \mathcal{C}_2\rangle$ is again a quotient of the affine Hecke algebra of type $\widetilde{A_2}$, but it is not even clear to us whether the algebra $\langle \mathcal{C}_2 \rangle$ has finite rank or not (equivalently if there are finitely many indecomposables in $\mathcal{C}_2$). 

As in type $B_2$, we give an example of an indecomposable bimodule $B$ in $\mathcal{C}$ which cannot be isomorphic to a (shift of a) bimodule of the form $R_w B' R_{w'}$, where $B'$ is an indecomposable bimodule in $\mathcal{B}$. Consider $B:= B_s R_t B_u \cong B_s B_{tut} R_t$. This element has a filtration with subquotients given by (shifts of) $R_t$, $R_{st}$, $R_{tu}$ and $R_{stu}$, each appearing exactly once. Now the only indecomposable Soergel bimodules $B'\in\mathcal{B}$ with exactly four subquotients in their (twisted) standard filtrations are the $B'$ associated to the group elements $x=s_1s_2$, $s_i\in S$, $s_1\neq s_2$ (and up to shifts the subquotients are $R, R_{s_1}, R_{s_2}, R_{s_1 s_2}$; the subquotients are independent of the filtration but the shifts may differ). Set $A:=\{t, st, tu, stu\}$. Assume that $B\cong R_w B' R_{w'}\cong R_{\widetilde{w}} \underbrace{R_{w'^{-1}} B' R_{w'}}_{=:B''}$ for $B'\in\mathcal{B}$ associated to such a group element $x$. Then $B''$ has a filtration with four subquotients given by (possibly shifted) $R_y$ for $y=1, t_1, t_2, t_1t_2$, where $t_i$ are reflections ($t_i:=w'^{-1} s_i w'$). It implies that $\widetilde{w} \{ 1, t_1, t_2, t_1t_2\}=A$, in particular, that $\widetilde{w}\in A$. Checking with the four possible values of $\widetilde{w}$, we see that $\widetilde{w}^{-1} A$ is never of the form $\{1, t_1, t_2, t_1t_2\}$, where $t_i$ are reflections, which concludes.  

\vspace{1cm}

\paragraph*{\bf Acknowledgements.}~The authors are grateful to the Hausdorff Research Institute for Mathematics (HIM), University of Bonn, for the warm hospitality and the support as this project was completed during their visit at the HIM in fall 2017 within the Trimester Program 'Symplectic Geometry and Representation Theory'. They also thank the GRDI 'Representation theory' and the DFG for funding the stay of the second author in Nancy in September 2017. The first author was funded by the ANR GeoLie ANR-15-CE40-0012.  

The authors thank Pierre-Emmanuel Chaput, Emmanuel Wagner and Geordie Williamson for useful discussions and suggestions.

\end{document}